\newtheorem{theorem}{THEOREM}[section]
\newtheorem{lemma}[theorem]{LEMMA}
\newtheorem{proposition}[theorem]{PROPOSITION}
\newtheorem{definition}[theorem]{\emph{Definition}}
\newtheorem{remark}[theorem]{\emph{Remark}}
\newtheorem{corollary}[theorem]{COROLLARY}
\newtheorem{question}[theorem]{QUESTION}
\def\to{\rightarrow}
\def\ot{\otimes}
\def\c{\mathcal}
\def\b{\mathbf}
\def\r{\mathrm}
\def\bb{\mathbb}
\begin{document}
\title[Contractible Banach Algebras]{A REMARK ON CONTRACTIBLE\\ BANACH ALGEBRAS OF OPERATORS}
\author[M.M. Sadr]{MAYSAM MAYSAMI SADR}
\begin{abstract}
For a Banach algebra $A$, we say that an element $M$ in $A\ot^\gamma A$ is a hyper-commutator if $(a\ot1)M=M(1\ot a)$ for every
$a\in A$. A diagonal for a Banach algebra is a hyper-commutator which its image under diagonal mapping is $1$. It is well-known that
a Banach algebra is contractible iff it has a diagonal. The main aim of this note is to show that for any Banach subalgebra $A\subseteq\c{L}(X)$ of
bounded linear operators on infinite-dimensional Banach space $X$, which contains the ideal of finite-rank operators, the image of
any hyper-commutator of $A$ under the canonical algebra-morphism $\c{L}(X)\ot^\gamma\c{L}(X)\to\c{L}(X\ot^\gamma X)$, vanishes.\\
\\
\emph{AMS 2020 Subject Classification:} Primary 46H20; Secondary 47L10.\\
\\
\emph{Key words:} Banach algebra, Contractibility, Diagonal, Amenability.
\end{abstract}
\maketitle
\section{\textbf{INTRODUCTION}}\label{2211070001}
A Banach algebra $A$ is called contractible (super-amenable) \cite{Runde1,Runde2} if every bounded derivation from $A$ into any
Banach $A$-bimodule, is inner. Contractibility is a strong version of the notion of amenability. The concept of amenability (for Banach algebras)
has been formulated by Johnson in his seminal paper \cite{Johnson1} on Hochschild cohomology of Banach algebras.
For various notions of amenability in Theory of Banach Algebras, see \cite{Mewomo1,Runde1,Runde2}.
It is known that any finite-dimensional contractible Banach algebra is a finite direct sum of full matrix algebras \cite[Theorem 4.1.4]{Runde2}.
Until now, the only known contractible Banach algebras are of this form.
Indeed, it is a longstanding question that whether every contractible Banach algebra is
finite-dimensional \cite[p. 224]{Runde1}. Also, the following special case of this question
has not been answered yet \cite{Gronbaek1},\cite[p. 224]{Runde1}: does for any Banach space $X$, the contractibility of the
Banach algebra $\c{L}(X)$ of all bounded linear operators on $X$, imply that $X$ is finite-dimensional?
For information on these questions see \cite[$\S$4.1 \& p. 196]{Runde2} and \cite{Ozawa1,Johnson2}.
We must remark that the chance that there exist infinite-dimensional
contractible Banach algebras is not very small: For a long time it was a common belief that for infinite-dimensional Banach spaces $X$,
$\c{L}(X)$ can not be amenable. But, in 2009, Argyros and Haydon \cite{Argyros1} found out a specific
infinite-dimensional Banach space $E$ which its dual is $\ell^1=E^*$ and has The Scaler-Plus-Compact Property.
For such a Banach space $E$, as it has been pointed out by Dales, $\c{L}(E)$ is an amenable Banach algebra; see \cite{Runde3}.

In this note, we introduce the notion of hyper-commutator for Banach algebras.
It is well-known that a Banach algebra is contractible iff it is unital and has a diagonal. By definition, a diagonal of a
Banach algebra is a hyper-commutator which its image under the diagonal mapping is $1$. The main aim of this note is to prove the following
property of hyper-commutators: For any infinite-dimensional Banach space $X$, and any Banach subalgebra $A$ of $\c{L}(X)$
which contains the ideal of finite-rank operators, the image of any hyper-commutator of $A$, under the canonical algebra-morphism,
$$A\ot^\gamma A\hookrightarrow\c{L}(X)\ot^\gamma\c{L}(X)\to\c{L}(X\ot^\gamma X),$$ vanishes.
For the proof we use the famous Kadec-Snobar's estimate \cite[Theorem 6.28]{Fabian1} on operator-norms of projections.

Since our results are mainly concerned about contractibility of $\c{L}(X)$, some known results on contractibility
are organized in $\S$\ref{2211070002} for contractible central Banach algebras. (So, there is nothing special new in $\S$\ref{2211070002}.) 
In $\S$\ref{2211070004}, we prove our main result and give some new remarks on contractibility of $\c{L}(X)$.
\section{\textbf{SOME KNOWN RESULTS ON CONTRACTIBILITY}}\label{2211070002}
For preliminaries on contractibility, we refer the reader to Runde's books \cite{Runde1,Runde2}.
(All results in this section are well-known or are simple variations of the results of \cite{Johnson2,Runde1,Runde2,Ozawa1}.)
The topological dual of a Banach space $X$ is denoted by $X^*$. The completed projective tensor product of Banach spaces
$X,Y$ is denoted by $X\ot^\gamma Y$. The projective norm is denoted by $\|\cdot\|_\gamma$.
The Banach space of bounded linear operators from $X$ into $Y$ is denoted by $\c{L}(X,Y)$.
For Banach algebras $A,B$, the Banach space $A\ot^\gamma B$ is a Banach algebra with the multiplication given by
$(a\ot b)(a'\ot b')=aa'\ot bb'$ for $a,a'\in A,b,b'\in B$.
The diagonal mapping $\Delta:A\ot^\gamma A\to A$ for $A$ is the unique bounded linear operator defined by $a\ot b\mapsto ab$.
A diagonal for a unital Banach algebra $A$ is an element $M\in A\ot^\gamma A$ satisfying
\begin{equation*}
\Delta(M)=1,\hspace{10mm}(c\ot 1)M=M(1\ot c),\hspace{5mm}(c\in A).
\end{equation*}
It is well-known that a Banach algebra is contractible iff it is unital and has a diagonal:
Suppose that $A$ is contractible. Let $E$ denote the Banach $A$-bimodule with the underlying Banach space $A$ and, left and right module-operations
$ax:=ax$ and $xa:=0$ for $a\in A,x\in E$. Then $\r{id}:A\to E$ is a derivation and hence inner. Thus $A$ has a right unit.
Similarly, it is proved that $A$ has a left unit and hence $A$ is unital. Now, consider the derivation $D:A\to\ker(\Delta)$  defined by
$a\mapsto (1\ot a)-(a\ot1)$.
(Note that $A\ot^\gamma A$ is canonically a Banach $A$-bimodule with module-operations given by
$c(a\ot b):=(c\ot1)(a\ot b)$ and $(a\ot b)c:=(a\ot b)(1\ot c)$, and $\Delta$ is a bimodule-morphism.)
$D$ must be inner and thus there is $N\in\ker(\Delta)$ with the property $aN-Na=D(a)$; hence $M:=N+(1\ot1)$ is a diagonal for $A$.
Conversely, suppose that $M$,
\begin{equation}\label{p3}
M=\sum_{n=1}^\infty a_n\ot b_n,\hspace{10mm}\sum_{n=1}^\infty\|a_n\|\|b_n\|<\infty,\hspace{5mm}(a_n,b_n\in A)\end{equation}
is a diagonal for $A$. If $D:A\to X$ is a bounded derivation, then it can be checked that for the element $z:=\sum_{n=1}^\infty a_nD(b_n)$ of $X$
we have $D(a)=az-za$. Thus $D$ is inner.
\begin{lemma}\label{2211040916}
Let $A$ be a contractible Banach algebra and let $E,F$ be unital Banach left $A$-modules. Then any diagonal for $A$ gives rise to a
bounded projection $\Phi=\Phi_{E,F}$ from $\c{L}(E,F)$ onto $\tensor[_A]{\c{L}}{}(E,F)$.\end{lemma}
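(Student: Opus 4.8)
The plan is to run the standard averaging argument for contractible algebras. Fix a diagonal $M=\sum_{n=1}^\infty a_n\ot b_n$ for $A$ as in \eqref{p3}, so that $\sum_{n=1}^\infty\|a_n\|\|b_n\|<\infty$, and for $T\in\c{L}(E,F)$ define $\Phi(T)\colon E\to F$ by
$$\Phi(T)(x):=\sum_{n=1}^\infty a_n T(b_nx)\qquad(x\in E),$$
where juxtaposition denotes the left $A$-module actions on $E$ and on $F$. First I would check well-definedness and boundedness: each summand is bounded in norm by $\|a_n\|\,\|T\|\,\|b_n\|\,\|x\|$, so the series converges absolutely in $F$ and $\|\Phi(T)\|\le\big(\sum_n\|a_n\|\|b_n\|\big)\|T\|$. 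Hence $\Phi$ is a bounded linear operator on $\c{L}(E,F)$ whose norm is controlled by the projective norm of $M$.

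The central point is to verify that $\Phi(T)$ is left $A$-linear, and this is exactly where the diagonal identity $(c\ot1)M=M(1\ot c)$ enters. For fixed $x\in E$ I would introduce the bounded linear map $\Psi_x\colon A\ot^\gamma A\to F$ determined by $u\ot v\mapsto uT(vx)$, which exists because $(u,v)\mapsto uT(vx)$ is bounded bilinear; then $\Phi(T)(x)=\Psi_x(M)$. A direct computation, pulling the continuous module actions through the absolutely convergent series, gives for every $c\in A$
$$c\,\Phi(T)(x)=\Psi_x\big((c\ot1)M\big),\qquad \Phi(T)(cx)=\Psi_x\big(M(1\ot c)\big).$$
Since $M$ is a diagonal the two arguments of $\Psi_x$ agree, so the right-hand sides coincide and $\Phi(T)(cx)=c\,\Phi(T)(x)$; that is, $\Phi(T)\in\tensor[_A]{\c{L}}{}(E,F)$.

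Finally I would show that $\Phi$ is a projection with range exactly $\tensor[_A]{\c{L}}{}(E,F)$. If $T$ is already left $A$-linear, then $T(b_nx)=b_nT(x)$ and therefore
$$\Phi(T)(x)=\sum_{n=1}^\infty a_nb_n\,T(x)=\Delta(M)\,T(x)=1\cdot T(x)=T(x),$$
using $\Delta(M)=1$ together with the unitality of $E$ and $F$. Thus $\Phi$ restricts to the identity on $\tensor[_A]{\c{L}}{}(E,F)$, which combined with the previous step yields $\Phi^2=\Phi$ and $\r{ran}\,\Phi=\tensor[_A]{\c{L}}{}(E,F)$. I do not expect a genuine obstacle here: once the auxiliary map $\Psi_x$ is set up, the module-linearity check is a one-line consequence of the hyper-commutator property, and the only step demanding real care is the routine justification of interchanging the module actions with the infinite series.
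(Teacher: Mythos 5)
Your proposal is correct and follows exactly the paper's construction: the paper defines the same operator $\Phi(T)(x)=\sum_{n=1}^\infty a_nT(b_nx)$ and leaves the verifications as routine, while you supply them (absolute convergence, the auxiliary map $\Psi_x$ making the use of $(c\ot1)M=M(1\ot c)$ rigorous, and the identity $\Phi(T)=T$ for $T\in\tensor[_A]{\c{L}}{}(E,F)$ via $\Delta(M)=1$ and unitality). No gaps; this is the paper's argument with the details filled in.
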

\begin{proof}Let $M$ be a diagonal for $A$ of the form (\ref{p3}). For $T\in\c{L}(E,F)$ let
$$\Phi(T):E\to F,\hspace{10mm}x\mapsto\sum_{n=1}^\infty a_nT(b_nx),\hspace{5mm}(x\in E).$$
Then it can be checked that $\Phi(T)$ is well-defined and belongs to $\tensor[_A]{\c{L}}{}(E,F)$. Also, it is easily verified that
$T\mapsto\Phi(T)$ is a bounded linear projection.\end{proof}
\begin{proposition}\label{2211040917}
Let $A$ be a contractible Banach algebra. Then any diagonal for $A$ gives rise to a canonical bounded linear operator
$\Psi:A\to\c{Z}(A)$ with $\Psi(1)=1$.\end{proposition}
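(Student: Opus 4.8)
The plan is to build $\Psi$ explicitly out of the diagonal by a ``sandwich'' construction. Fix a diagonal $M=\sum_{n=1}^\infty a_n\ot b_n$ of the form (\ref{p3}), so that $\sum_n\|a_n\|\|b_n\|<\infty$, $\Delta(M)=\sum_n a_nb_n=1$, and the commutation relation $\sum_n ca_n\ot b_n=\sum_n a_n\ot b_nc$ holds in $A\ot^\gamma A$ for every $c\in A$. (Recall that a contractible Banach algebra is in particular unital, so the unit $1$ is at our disposal.) For $a\in A$ I would then set $\Psi(a):=\sum_n a_nab_n$. Absolute convergence of this series is immediate from $\|a_nab_n\|\le\|a\|\,\|a_n\|\|b_n\|$ together with $\sum_n\|a_n\|\|b_n\|<\infty$; linearity in $a$ is clear from the formula, and the same estimate shows $\Psi$ is bounded with $\|\Psi\|\le\sum_n\|a_n\|\|b_n\|$.

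Two points then remain: that $\Psi$ takes values in the centre, and that $\Psi(1)=1$. The second is trivial, since $\Psi(1)=\sum_n a_n1b_n=\sum_n a_nb_n=\Delta(M)=1$. The first is the crux of the argument and the step I expect to require the most care. For each fixed $a\in A$ the assignment $(x,y)\mapsto xay$ is bounded and bilinear, hence extends to a bounded linear ``sandwich'' map $\theta_a:A\ot^\gamma A\to A$ determined by $\theta_a(x\ot y)=xay$. Applying $\theta_a$ to both sides of the commutation relation $\sum_n ca_n\ot b_n=\sum_n a_n\ot b_nc$ turns the left-hand side into $\sum_n (ca_n)ab_n=c\Psi(a)$ and the right-hand side into $\sum_n a_na(b_nc)=\Psi(a)c$. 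Thus $c\Psi(a)=\Psi(a)c$ for every $c\in A$, i.e.\ $\Psi(a)\in\c{Z}(A)$, as required.

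The only delicate point is the legitimacy of pushing the bounded operator $\theta_a$ through the infinite sums and through the tensor-product identity, but this is exactly what the projective norm furnishes: convergence in $A\ot^\gamma A$ is preserved by bounded linear maps, and $\theta_a$ is continuous. Beyond this bookkeeping I do not anticipate any genuine obstacle; the whole proposition reduces to writing down $\Psi(a)=\sum_n a_nab_n$ and reading off its centrality from the defining property of the diagonal via the auxiliary map $\theta_a$.
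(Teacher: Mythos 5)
Your proof is correct, and it constructs exactly the same operator as the paper --- $\Psi(c)=\sum_{n}a_ncb_n$ --- but it extracts centrality by a different and more self-contained mechanism. The paper routes the argument through Lemma \ref{2211040916}: the map $x\mapsto\sum_n a_ncb_nx$ is $\Phi_{A,A}(\ell_c)$, hence a left $A$-module morphism; being a module endomorphism of the unital module $A$ it equals right multiplication by $\tilde c:=\Phi_{A,A}(\ell_c)(1)=\sum_n a_ncb_n$, while from its very formula it is also left multiplication by $\tilde c$, and comparing the two gives $a\tilde c=\tilde c a$ for all $a$. You instead apply the bounded sandwich map $\theta_a$, determined by $\theta_a(x\ot y)=xay$ and furnished by the universal property of the projective norm, directly to the identity $(c\ot 1)M=M(1\ot c)$, obtaining $c\Psi(a)=\Psi(a)c$ in one stroke. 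The two mechanisms are ultimately the same --- the module-morphism property asserted in Lemma \ref{2211040916} is itself verified by pushing the commutation relation through a bounded linear map of just this kind --- but your version bypasses the lemma entirely and makes explicit the continuity bookkeeping that the paper leaves as ``it can be checked,'' whereas the paper's packaging reuses Lemma \ref{2211040916}, which it needs anyway for Lemmas \ref{2211101430} and \ref{2211040159}. A small side benefit of your formulation: writing $\Psi(a)=\theta_a(M)$ shows at once that $\Psi$ depends only on the element $M\in A\ot^\gamma A$ and not on the chosen representation $\sum_n a_n\ot b_n$, a point left implicit in the paper.
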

\begin{proof} Consider $A$ as a Banach left $A$-module in the canonical fashion. For any $c\in A$, let $\ell_c:A\to A$ denote the left multiplication
operator by $c$. By the notations of Lemma \ref{2211040916}, $$\Phi_{A,A}(\ell_c):A\to A,\hspace{10mm}x\mapsto\sum_{n=1}^\infty a_ncb_nx$$
is a left module-morphism and hence there is a $\tilde{c}\in A$ such that $\Phi_{A,A}(\ell_c)=r_{\tilde{c}}$ where $r_{\tilde{c}}:A\to A$
denotes the right multiplication operator by $\tilde{c}$. It is clear that $\tilde{c}=\sum_{n=1}^\infty a_ncb_n$ and $\tilde{c}\in\c{Z}(A)$.
We let $\Psi$ to be defined by $c\mapsto\tilde{c}$.\end{proof}
The following result is a variation of \cite[Proposition 5.1]{Johnson2}.
\begin{proposition}\label{2211040918}
Let $A$ be a contractible central Banach algebra. Then any diagonal for $A$ gives rise to a canonical bounded linear functional
$\psi\in A^*$ with $\psi(1)=1$.\end{proposition}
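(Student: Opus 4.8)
The plan is to read $\psi$ off directly from the operator $\Psi$ supplied by Proposition \ref{2211040917}. Recall that calling $A$ \emph{central} means its center is as small as possible, namely $\c Z(A)=\m C1$. Thus, once we fix a diagonal $M$ for $A$, the bounded linear operator $\Psi:A\to\c Z(A)$ of Proposition \ref{2211040917} actually takes its values in the one-dimensional subspace $\m C1$, and still satisfies $\Psi(1)=1$.

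The map $\m C\to\m C1$, $\lambda\mapsto\lambda1$, is a topological linear isomorphism (its inverse is bounded because $1\neq0$), so for each $c\in A$ there is a unique scalar $\psi(c)$ with $\Psi(c)=\psi(c)1$. Being the composition of $\Psi$ with this bounded inverse, $\psi$ is a bounded linear functional; and evaluating the relation at $c=1$ together with $\Psi(1)=1$ gives $\psi(1)1=1$, hence $\psi(1)=1$. This $\psi$ is the desired element of $A^*$.

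There is no genuine obstacle here: the entire content is the observation that centrality collapses the target $\c Z(A)$ of $\Psi$ onto the line $\m C1$, after which $\psi$ is simply $\Psi$ expressed in scalar coordinates. The only point deserving a word of care is the (harmless) identification of $\m C1$ with $\m C$ as Banach spaces, which causes no trouble since $1\neq0$.
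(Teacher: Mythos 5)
Your proposal is correct and follows exactly the paper's route: the paper likewise defines $\psi$ by writing $\Psi(c)=\psi(c)1$, using centrality to identify $\c{Z}(A)$ with $\m{C}1$. The only difference is that you spell out explicitly the (routine) boundedness of $\psi$ via the isomorphism $\m{C}\to\m{C}1$, which the paper leaves implicit.
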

\begin{proof}We have $\c{Z}(A)=\mathbb{C}1$. With the notations of Proposition \ref{2211040917}, $\psi$ is defined by
$$\Psi(c)=\sum_{n=1}^\infty a_ncb_n=\psi(c)1,\hspace{10mm}(c\in A).$$\end{proof}
\begin{theorem}\label{2211040919}
Let $A$ be a contractible central Banach algebra. Then $A$ has a unique maximal (two-sided) ideal $\c{M}_A$.\end{theorem}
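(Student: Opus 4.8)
The plan is to build the unique maximal ideal around the central functional $\psi\in A^*$ produced by Proposition \ref{2211040918}. Recall that, fixing a diagonal $M=\sum_{n=1}^\infty a_n\ot b_n$, this $\psi$ satisfies $\psi(1)=1$ and
$$\sum_{n=1}^\infty a_ncb_n=\psi(c)1,\hspace{10mm}(c\in A).$$
Since $A$ is contractible it is unital, so I may use the standard fact that every maximal two-sided ideal of a unital Banach algebra is closed (the closure of a proper ideal is again proper, as it misses the open unit ball around $1$). Hence it suffices to work with closed two-sided ideals throughout.

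The key step I would prove first is the following trapping property: \emph{every proper closed two-sided ideal $I$ of $A$ is contained in $\ker\psi$.} Indeed, for $c\in I$ each term $a_ncb_n$ lies in $I$ because $I$ is a two-sided ideal, the partial sums $\sum_{n=1}^N a_ncb_n$ therefore lie in $I$, and they converge in norm to $\psi(c)1$ since $\sum_n\|a_n\|\|b_n\|<\infty$. As $I$ is closed, $\psi(c)1\in I$; were $\psi(c)\neq0$ this would force $1\in I$ and hence $I=A$, a contradiction. Thus $\psi(c)=0$ for all $c\in I$, i.e. $I\subseteq\ker\psi$.

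With this in hand I would finish by a directedness argument. Let $\c{F}$ be the family of all proper closed two-sided ideals of $A$; it is nonempty (it contains $\{0\}$) and, by the trapping property, every member lies inside the hyperplane $\ker\psi$. For $I,J\in\c{F}$ the closed ideal $\overline{I+J}$ again lies in $\ker\psi$, hence is proper and belongs to $\c{F}$, so $\c{F}$ is upward directed. Consequently the union $\bigcup_{I\in\c{F}}I$ is itself a two-sided ideal contained in $\ker\psi$, and its closure $\c{M}_A$ is a proper closed two-sided ideal containing every member of $\c{F}$. Since (by Zorn's lemma) proper ideals extend to maximal ones and maximal ideals are closed, each maximal ideal lies in $\c{F}$ and is therefore contained in the proper ideal $\c{M}_A$; by maximality it equals $\c{M}_A$. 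Hence $\c{M}_A$ is the unique maximal two-sided ideal.

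The point I expect to require the most care is resisting the temptation to take $\ker\psi$ itself as the maximal ideal: $\ker\psi$ is a codimension-one hyperplane but generally not an ideal at all. The model case $A=M_n(\m{C})$, where $\psi$ is the normalized trace and $\c{M}_A=\{0\}$ while $\ker\psi$ is the non-ideal of traceless matrices, shows that $\ker\psi$ only serves as a \emph{barrier} confining proper ideals, and that the actual maximal ideal must be extracted as the largest ideal sitting below this barrier. The directedness of $\c{F}$ is exactly what guarantees that such a largest element exists.
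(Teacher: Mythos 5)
Your proof is correct and follows essentially the same route as the paper: both rest on the trapping property that $\psi$ from Proposition \ref{2211040918} annihilates every proper ideal (since $\psi(c)\neq0$ for $c$ in a proper ideal would put $1$ in its closure), and both then assemble the unique maximal ideal as an aggregate of all proper ideals sitting under the barrier $\ker\psi$. The only difference is cosmetic: the paper takes the closed linear span of all elements lying in proper ideals, while you take the closure of the directed union of proper closed ideals; both devices yield the same proper ideal containing every proper ideal.
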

\begin{proof}Let $$\c{M}_A:=\text{ closed linear span of }\Big\{c\in A: c \text{ belongs to a proper ideal of } A\Big\}.$$
It is clear that $\c{M}_A$ is an ideal of $A$ which contains every proper ideal of $A$. With $\psi$ as in Proposition \ref{2211040918},
for any $c\in A$ which is contained in a proper ideal $J$ of $A$, we must have $\psi(c)=0$, because otherwise we must have
$1=\psi(c)^{-1}\sum_{n=1}^\infty a_ncb_n\in J$, a contradiction. Thus $\c{M}_A\subseteq\ker(\psi)$
and hence $\c{M}_A$ is a proper ideal of $A$.\end{proof}
A closed linear subspace $F$ of a Banach space $E$ is called topologically complemented if there is a closed linear subspace $F'$ of $E$ such that
$E=F\oplus F'$. In this case $F'$ is called a topological complement for $F$. $F$ is topologically complemented in $E$ iff there is a bounded
linear projection from $E$ onto $F$.
\begin{lemma}\label{2211101430}
Let $A$ ba a contractible Banach algebra and let $E$ be a unital Banach left $A$-module. Suppose that $E$ is compactly generated i.e.
there exists a norm-compact subset $K$ of $E$ such that every $x\in E$ is of the form $x=ay$ for some $a\in A,y\in K$.
Suppose that $E$ has approximation property. Then $E$ is finite-dimensional.\end{lemma}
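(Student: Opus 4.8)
The plan is to manufacture, from any diagonal of $A$, a \emph{compact} $A$-module endomorphism of $E$ that is so close to the identity in operator norm that it must be invertible; since a compact operator can be invertible only on a finite-dimensional space, this will force $\dim E<\infty$. The endomorphism will be $\Phi(T)=\Phi_{E,E}(T)$ from Lemma \ref{2211040916}, applied to a finite-rank $T$ supplied by the approximation property. The whole difficulty is to pass from the \emph{pointwise} control that the approximation property provides to a genuine \emph{operator-norm} estimate, and I expect that passage to be the main obstacle.

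First I would record two reductions. Writing a diagonal as in (\ref{p3}), $M=\sum_n a_n\ot b_n$, one may replace each summand $a_n\ot b_n$ by $(t_na_n)\ot(t_n^{-1}b_n)$ without changing $M$; choosing the scalars $t_n>0$ suitably (possible since $\sum_n\|a_n\|\|b_n\|<\infty$) I arrange $\sum_n\|a_n\|<\infty$ and $\|b_n\|\to0$. The second, and crucial, reduction is an operator-norm control for module maps. Because $K$ is compact, its closed absolutely convex hull $\ov K$ is compact, and the associated Banach disc $E_{\ov K}$ (the span of $\ov K$ normed by the gauge of $\ov K$) maps into $E$ by a bounded map $A\ot^\gamma E_{\ov K}\to E$, $a\ot u\mapsto au$, which is \emph{onto}, since every $x\in E$ equals $ay$ with $y\in K\subseteq\ov K$. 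The open mapping theorem then produces a constant $C$ such that every $x$ has a representation $x=\sum_j a_ju_j$ with $u_j\in E_{\ov K}$ and $\sum_j\|a_j\|\,\|u_j\|_{E_{\ov K}}\le C\|x\|$. Consequently, for \emph{any} bounded $A$-module endomorphism $\Theta$ of $E$ one gets $\|\Theta\|\le C\sup_{y\in K}\|\Theta y\|$, because $\Theta(x)=\sum_j a_j\Theta(u_j)$ while $\sup_{\ov K}\|\Theta(\cdot)\|=\sup_K\|\Theta(\cdot)\|$ by linearity. This is the heart of the matter: without the absolutely-convex-hull/Banach-disc device the compact generation only bounds $\Theta$ on $K$ (through the uncontrolled representations $x=ay$), which is far too weak.

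With these in hand the proof closes quickly. Set $\tilde K:=\ov{\{b_ny:\ n\ge1,\ y\in K\}\cup K}$; since $\|b_n\|\to0$ and $K$ is compact, $\tilde K$ is compact. By the approximation property choose a finite-rank $T\in\c{L}(E)$ with $\sup_{z\in\tilde K}\|Tz-z\|\le\epsilon$. Then $\Phi(T)$ is an $A$-module endomorphism, and expanding $T=\sum_i\phi_i\ot e_i$ shows $\Phi(T)x=\sum_{i,n}\langle b_nx,\phi_i\rangle\,a_ne_i$ is \emph{nuclear}, with nuclear norm at most $\big(\sum_i\|\phi_i\|\|e_i\|\big)\big(\sum_n\|a_n\|\|b_n\|\big)<\infty$; in particular $\Phi(T)$ is compact. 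Using $\Delta(M)=1$ to write $y=\sum_n a_nb_ny$, for $y\in K$ we get $\|y-\Phi(T)y\|=\|\sum_n a_n(b_ny-Tb_ny)\|\le\epsilon\sum_n\|a_n\|$, as each $b_ny\in\tilde K$. Hence $\sup_{y\in K}\|(\mathrm{id}-\Phi(T))y\|\le\epsilon\sum_n\|a_n\|$, and applying the operator-norm control to the module endomorphism $\mathrm{id}-\Phi(T)$ gives $\|\mathrm{id}-\Phi(T)\|\le C\epsilon\sum_n\|a_n\|$. Choosing $\epsilon$ small enough makes this $<1$, so $\Phi(T)$ is invertible by a Neumann series; being also compact, it can be invertible only if $E$ is finite-dimensional, which is the assertion.
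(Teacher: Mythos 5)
Your proof is correct and follows the same overall strategy as the paper's own proof: normalize the diagonal, apply $\Phi_{E,E}$ from Lemma \ref{2211040916} to a finite-rank operator supplied by the approximation property, observe that the result is a compact (indeed nuclear) module morphism close to the identity on the generating compact set, upgrade this to an operator-norm estimate, and conclude that $\r{id}_E$ is compact, hence $E$ finite-dimensional. Where you genuinely add something is the step you yourself flag as the heart of the matter: the paper's proof simply asserts that ``since $\Phi_{E,E}(S_\lambda)$'s are module-morphisms and $K$ generates $E$, we have $\Phi_{E,E}(S_\lambda)\to\r{id}_E$ in operator-norm,'' but for a module morphism $\Theta$ and a representation $x=ay$ one only gets $\|\Theta(x)\|\leq\|a\|\,\|\Theta(y)\|$, with no control of $\|a\|$ in terms of $\|x\|$, so pointwise or compact-uniform convergence does not by itself yield norm convergence. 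Your Banach-disc construction $E_{\ov{K}}$ together with the open mapping theorem applied to the surjection $A\ot^\gamma E_{\ov{K}}\to E$ supplies exactly the missing uniform constant $C$ with $\|\Theta\|\leq C\sup_{y\in K}\|\Theta(y)\|$ (and your reduction of $\sup_{\ov{K}}$ to $\sup_K$ by absolute convexity and continuity is right), so your write-up makes rigorous a step the paper leaves implicit. The remaining differences are cosmetic: you use a single finite-rank $T$ and invert $\Phi(T)$ by a Neumann series, where the paper uses a net $(S_\lambda)$ and the norm-closedness of the ideal of compact operators; and your normalization $\sum_n\|a_n\|<\infty$, $\|b_n\|\to0$ replaces the paper's $\sup_n\|a_n\|<\infty$, $\|b_n\|\to0$ --- both are achievable by rescaling and either suffices for the estimates.
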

\begin{proof}
Let $M$ be a diagonal for $A$ of the form (\ref{p3}). We can suppose that $\|b_n\|\to0$ and $\sup_{n\geq1}\|a_n\|<\infty$.
Continuity of the module-operation implies that the set $\cup_{n\geq1}b_nK\subset E$ is contained in a compact subset of $E$.
Let $\Phi_{E,E}$ be defined as in the proof of Lemma \ref{2211040916}. The approximation property for $E$ means that there exists a net
$(S_\lambda)_\lambda$ of finite-rank operators in $\c{L}(E)$ such that $S_\lambda\to\r{id}_E$ uniformly on compact subsets of $E$.
The above assumptions imply that $\Phi_{E,E}(S_\lambda)$ is a net of compact operators on $E$ such that converges
uniformly to $\r{id}_E$ on $K$. Now, since $\Phi_{E,E}(S_\lambda)$'s are module-morphisms and $K$ generates $E$, we have
$\Phi_{E,E}(S_\lambda)\to\r{id}_E$ in operator-norm. Thus, $\r{id}_E$ is a compact operator. Hence, $E$ is finite-dimensional.\end{proof}
Note that any Banach space $X$ considered as a unital Banach left $\c{L}(X)$-module in the canonical fashion,
is generated by any of its nonzero vectors. Also, for any unital Banach algebra $A$ and any closed left ideal $J$ of $A$, the quotient
Banach left $A$-module $A/J$ is generated by the class of $1$ in $A/J$.
\begin{lemma}\label{2211040159}
Let $A$ be a contractible Banach algebra and let $E$ be a unital Banach left $A$-module. Suppose that $F\subset E$ is a closed submodule
which is (as a Banach space) topologically complemented in $E$. Then $F$ has a topological
complement in $E$ which is also a closed submodule.\end{lemma}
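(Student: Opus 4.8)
The plan is to manufacture a bounded \emph{module-morphism} projection of $E$ onto $F$ out of an arbitrary Banach-space projection, and then take its kernel as the required complement. By hypothesis $F$ is topologically complemented in $E$, so there is a bounded linear projection $P\in\c{L}(E)$ with range $F$ and $P|_F=\r{id}_F$. There is no reason for $P$ to respect the $A$-action, so the point of the argument is to correct this by averaging against a diagonal.

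Since $A$ is contractible it is unital and carries a diagonal $M=\sum_n a_n\ot b_n$ as in (\ref{p3}); note also that, as a submodule of the unital module $E$, the space $F$ is itself a unital left $A$-module, so Lemma \ref{2211040916} applies to the pair $(E,F)$. Viewing $P$ as an element of $\c{L}(E,F)$ (its range already lies in $F$), I would apply the projection $\Phi=\Phi_{E,F}$ of Lemma \ref{2211040916} and set $Q:=\Phi(P)$. By that lemma $Q$ is a bounded left $A$-module morphism from $E$ into $F$, given explicitly by $Q(x)=\sum_n a_nP(b_nx)$.

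The main step is to verify that $Q$ restricts to the identity on $F$, and this is where the submodule hypothesis is used. For $x\in F$ each $b_nx$ again lies in $F$, so $P(b_nx)=b_nx$, and therefore
\[
Q(x)=\sum_n a_n(b_nx)=\Big(\sum_n a_nb_n\Big)x=\Delta(M)x=x,
\]
the last equality using $\Delta(M)=1$ together with the fact that $E$ is unital. Thus $Q|_F=\r{id}_F$; since also $Q(E)\subseteq F$, the operator $Q\in\c{L}(E)$ is a bounded projection onto $F$ that is simultaneously an $A$-module morphism.

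Finally, I would set $F':=\ker Q$. As the kernel of a bounded module morphism it is a closed $A$-submodule of $E$, and because $Q$ is a projection onto $F$ one has $E=F\oplus F'$. Hence $F'$ is a closed submodule that topologically complements $F$, as desired. The only genuinely delicate point is the identity $Q|_F=\r{id}_F$, which rests precisely on $F$ being a submodule (so that $b_nx\in F$) together with the normalization $\Delta(M)=1$; everything else is formal and inherited directly from Lemma \ref{2211040916}.
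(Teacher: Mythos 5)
Your proposal is correct and follows exactly the paper's own argument: apply $\Phi_{E,F}$ from Lemma \ref{2211040916} to a bounded linear projection onto $F$ and take the kernel of the result. The paper merely states that $\Phi_{E,F}(p)$ ``is easily verified'' to be a projection onto $F$, and your computation using $b_nx\in F$ and $\Delta(M)=1$ is precisely that verification, carried out correctly.
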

\begin{proof}Let $p$ be a bounded linear projection from $E$ onto $F$. By Lemma \ref{2211040916}, $\Phi_{E,F}(p)$ is a module-morphism
from $E$ into $F$. It is easily verified that $\Phi_{E,F}(p)$ is also a projection from $E$ onto $F$. Thus $\ker\Phi_{E,F}(p)$ is the
desired complement for $F$.\end{proof}
If $A,A'$ are contractible Banach algebras with diagonals $M,M'$ of the forms as in (\ref{p3}), then
$\sum_{n,m=1}^\infty a_n\ot a'_m\ot b_n\ot b'_m$ is a diagonal for $A\ot^\gamma A'$. Also, $\sum_{n=1}^\infty b_n\ot a_n$ is a diagonal for
$A^{\r{op}}$, the opposite algebra of $A$. Thus if $A$ is contractible then $A\ot^\gamma A^{\r{op}}$ is contractible.
\begin{lemma}\label{2211040202}
The analogue of Lemma \ref{2211040159} is satisfied for bimodules: Let $A$ be a contractible Banach algebra and $E$
a unital Banach $A$-bimodule. If $F$ is a closed sub-bimodule of $E$ which is topologically complemented, then
it has a complement in $E$ which is also a sub-bimodule.\end{lemma}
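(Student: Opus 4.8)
The plan is to reduce this bimodule statement to the left-module case already established in Lemma \ref{2211040159}, by passing to the enveloping algebra $A^e:=A\ot^\gamma A^{\r{op}}$. The key input is supplied by the paragraph immediately preceding this lemma: whenever $A$ is contractible, so is $A^e$. Once the reduction is set up correctly, the statement follows by applying Lemma \ref{2211040159} to $A^e$ verbatim.

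First I would make explicit the standard dictionary identifying Banach $A$-bimodules with unital Banach left $A^e$-modules. Given a Banach $A$-bimodule $E$, I define a left action of $A^e$ on $E$ on elementary tensors by $(a\ot b)\cdot x:=axb$ (for $a,b\in A$, $x\in E$), extending by linearity and continuity; boundedness on $A\ot^\gamma A^{\r{op}}$ is guaranteed by the projective cross-norm. Using the multiplication $(a\ot b)(a'\ot b')=aa'\ot b'b$ of $A^e$—the reversal of order in the second leg is precisely why the opposite algebra is needed—a direct computation shows $(aa'\ot b'b)\cdot x=(aa')x(b'b)=(a\ot b)\cdot\big((a'\ot b')\cdot x\big)$, so this is genuinely a left action. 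The bimodule being unital translates into $(1\ot1)\cdot x=x$, i.e. $E$ is a unital left $A^e$-module.

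Under this dictionary the closed sub-bimodules of $E$ are exactly the closed left $A^e$-submodules, and $A$-bimodule morphisms are exactly left $A^e$-module morphisms. In particular, the hypothesis that $F$ is a closed sub-bimodule of $E$ which is topologically complemented as a Banach space says precisely that $F$ is a topologically complemented closed left $A^e$-submodule of the unital left $A^e$-module $E$.

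With this in place, I would invoke Lemma \ref{2211040159} with $A$ replaced by the contractible algebra $A^e$: it yields a closed $A^e$-submodule that is a topological complement of $F$ in $E$, which by the dictionary is exactly a sub-bimodule complement, as required. I expect the only real work to be the bookkeeping of the first two steps—checking the left-action axioms and the boundedness of the action on the projective tensor product, together with the matching of sub-bimodules to $A^e$-submodules—so that is where I would be most careful; the concluding appeal to Lemma \ref{2211040159} is then immediate.
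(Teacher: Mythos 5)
Your proposal is correct and follows essentially the same route as the paper: the paper's proof also passes to the enveloping algebra $A\ot^\gamma A^{\r{op}}$ (whose contractibility is supplied by the preceding paragraph), identifies unital Banach $A$-bimodules with unital Banach left $A\ot^\gamma A^{\r{op}}$-modules via $(a\ot b)x:=axb$, and then invokes Lemma \ref{2211040159}. Your write-up merely makes explicit the verifications (associativity of the action through the opposite multiplication, and the matching of sub-bimodules with submodules) that the paper leaves to the reader.
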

\begin{proof}Any unital Banach $A$-bimodule $E$ may be considered as unital Banach left $A\ot^\gamma A^\r{op}$-module with module operation given by
$(a\ot b)x:=axb$ ($a\in A,b\in A^\r{op},x\in E$). In this fashion, any $A$-bimodule-morphism is a left $A\ot^\gamma A^\r{op}$-module-morphism.
The converses of this facts are also satisfied. Now, the desired result follows from Lemma \ref{2211040159}.\end{proof}
\begin{theorem}\label{2211061315}
Let $A$ be a contractible central Banach algebra. Suppose that $J$ is a closed, proper, and nonzero ideal of $A$.
(Note that the existence of $J$ implies that $A$ is infinite-dimensional. Indeed, it follows from \cite[Theorem 4.1.2]{Runde2} that
any contractible central Banach algebra of finite dimension is isomorphic to a full matrix algebra.) The following statements hold.
\begin{enumerate}
\item[(i)] $J$ is not topologically complemented in $A$.
\item[(ii)] $A/J$ has not approximation property.
\item[(iii)] If $J$ is compactly generated as left (resp. right) $A$-module, then $J$ has not approximation property.\end{enumerate}\end{theorem}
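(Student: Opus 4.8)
The plan is to establish (i) directly and then derive (ii) and (iii) from it, by producing in each case a finite-dimensionality that forces $J$ to be topologically complemented. For (i) I would argue by contradiction: assume $J$ is topologically complemented in $A$. Being a closed two-sided ideal, $J$ is a closed sub-bimodule of the unital $A$-bimodule $A$, so Lemma \ref{2211040202} supplies a complementary closed sub-bimodule, i.e.\ a decomposition $A=J\oplus J'$ into two closed two-sided ideals. Since $JJ'$ and $J'J$ both lie in $J\cap J'=\{0\}$, writing $1=e+e'$ with $e\in J$ and $e'\in J'$ I would check that $e$ is an idempotent which serves as a two-sided identity for $J$ and which commutes with every element of $A$. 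Centrality of $A$ then gives $e\in\mathbb{C}1$, so $e=0$ or $e=1$; the first yields $J=Je=\{0\}$ and the second yields $1\in J$, i.e.\ $J=A$. Either case contradicts the hypotheses on $J$, so $J$ cannot be complemented.

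For (iii), suppose $J$ is compactly generated as a left $A$-module and has the approximation property. Under left multiplication $J$ is a unital Banach left $A$-module (the unit $1$ acts as the identity), so all hypotheses of Lemma \ref{2211101430} are met and $J$ must be finite-dimensional. But every nonzero finite-dimensional subspace of a Banach space is topologically complemented, contradicting (i). The right-module statement follows by the identical argument applied to $A^{\r{op}}$, which is again contractible and central with the diagonal data noted before the lemma.

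For (ii), assume $A/J$ has the approximation property. As the quotient of $A$ by the closed ideal $J$, it is a unital Banach left $A$-module generated by the single class $1+J$, hence compactly generated by the singleton $\{1+J\}$; Lemma \ref{2211101430} then forces $A/J$ to be finite-dimensional. Thus $J$ is a finite-codimensional closed subspace of $A$ and so is topologically complemented, again contradicting (i).

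The step I expect to carry the weight is (i), and more precisely the role of centrality. Lemma \ref{2211040202} only delivers a complementary \emph{ideal} $J'$, and without the hypothesis $\c{Z}(A)=\mathbb{C}1$ the idempotent $e$ is merely central, leaving a legitimate ring direct-sum decomposition rather than a contradiction; it is exactly centrality that collapses $e$ into $\{0,1\}$. The remaining ingredients, namely that finite-dimensional subspaces and finite-codimensional closed subspaces are always complemented, are standard and I would verify them only in passing.
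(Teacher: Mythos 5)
Your proof is correct, and parts (ii) and (iii) coincide with the paper's own argument: both reduce to Lemma \ref{2211101430} (applied to $A/J$, which is compactly generated by the singleton class of $1$, resp.\ to $J$, compactly generated by hypothesis) and then contradict (i) via the standard facts that finite-dimensional and closed finite-codimensional subspaces are always topologically complemented. The only divergence is in (i), and it is worth recording. The paper's primary argument also invokes Lemma \ref{2211040202} to upgrade the complement to a closed ideal $J'$ with $A=J\oplus J'$, but it then concludes via Theorem \ref{2211040919}: since $J$ and $J'$ are proper ideals, both lie in the unique maximal ideal $\c{M}_A$, whence $A=J\oplus J'\subseteq\c{M}_A$, contradicting properness of $\c{M}_A$. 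You instead extract the idempotent $e$ from $1=e+e'$, verify that it is central and acts as a unit on $J$, and let centrality ($\c{Z}(A)=\mathbb{C}1$) force $e\in\{0,1\}$, so $J=\{0\}$ or $J=A$. This is precisely the alternative the paper itself notes in its parenthetical remark (``(i) may be concluded from centrality of $A$''), so your route is sanctioned but is the secondary one there. It is in fact slightly more economical: it uses only Lemma \ref{2211040202} plus centrality, bypassing Theorem \ref{2211040919} and, with it, the functional $\psi$ of Proposition \ref{2211040918}; the paper's main route shows instead that uniqueness of the maximal ideal is what does the work once the complement is known to be an ideal. Your closing observation about where centrality enters --- and your handling of the right-module case in (iii) by passing to $A^{\r{op}}$, which the paper leaves implicit in its ``similarly'' --- are both accurate.
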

\begin{proof}
(i): If $J$ is topologically complemented in $A$, then by Lemma \ref{2211040202} there is a closed, proper, and nonzero ideal $J'$
such that $A=J\oplus J'$. Thus we have $J,J'\subset\c{M}_A$, a contradiction. (Note that (i) may be concluded from centrality
of $A$. Indeed, if $A=J\oplus J'$, then there exist orthogonal nonzero central idempotents $e\in J,e'\in J'$ with $e+e'=1$.)
(ii): If $A/J$ has approximation property, then by
Lemma \ref{2211101430}, $A/J$ is finite-dimensional and hence $J$ is topologically complemented in $A$, a contradiction with (i).
(iii) follows from Lemma \ref{2211101430}, similarly.\end{proof}
The following  corollary follows from the above results.
\begin{corollary}\label{2211071100}
Let $X$ be an infinite-dimensional Banach space. If $\c{L}(X)$ is contractible then, (i) $X$ has not approximation property;
(ii) $\c{L}(X)$ has a unique maximal ideal $\c{M}$; (iii) $\c{M}$ is not topologically complemented;
and (iv) $\c{L}(X)/\c{M}$ has not approximation property.\end{corollary}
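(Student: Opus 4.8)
The plan is to deduce all four assertions from the results of $\S\ref{2211070002}$ by checking that $A:=\c{L}(X)$ satisfies their hypotheses. Two structural facts should be established at the outset: (a) $A$ is \emph{central}, i.e. $\c{Z}(A)=\mathbb{C}\,\r{id}_X$; and (b) as a unital Banach left $A$-module, $X$ is compactly generated. For (a) I would take any central $T$, fix $x\neq0$ and $\phi\in X^*$ with $\phi(x)=1$, and use that $T$ commutes with the rank-one idempotent $y\mapsto\phi(y)x$; comparing $TP$ and $PT$ forces $Tx=\phi(Tx)\,x$ for every nonzero $x$, and a standard linear-independence argument then shows the scalar is independent of $x$, so $T\in\mathbb{C}\,\r{id}_X$. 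For (b) I would fix a nonzero $x_0$ and $\phi$ with $\phi(x_0)=1$; for arbitrary $x\in X$ the operator $T_x:=\phi(\cdot)x$ satisfies $T_xx_0=x$, so the single-point (hence norm-compact) set $K=\{x_0\}$ generates $X$, as already recorded after Lemma \ref{2211101430}.

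With these in hand, assertion (i) is immediate. Were $X$ to have the approximation property, then applying Lemma \ref{2211101430} to the compactly generated unital left $A$-module $X$ would force $X$ to be finite-dimensional, contradicting the standing hypothesis; hence $X$ lacks the approximation property.

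For (ii)--(iv) I would first observe that, since $X$ is infinite-dimensional, the ideal $\c{F}(X)$ of finite-rank operators is a nonzero proper ideal of $A$: it is nonzero because rank-one operators exist, and proper because $\r{id}_X$ is not of finite rank. Being contractible and central, $A$ has by Theorem \ref{2211040919} a unique maximal two-sided ideal $\c{M}$, which contains every proper ideal and in particular $\c{F}(X)$; thus $\c{M}$ is closed, proper, and nonzero, giving (ii). Applying Theorem \ref{2211061315} with $J=\c{M}$ then yields (iii) from part (i) of that theorem (no topological complement) and (iv) from part (ii) of that theorem ($A/\c{M}$ fails the approximation property).

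The only point requiring genuine argument rather than a citation is the centrality claim (a); once $\c{Z}(\c{L}(X))=\mathbb{C}1$ is secured, the corollary is an assembly of Lemma \ref{2211101430} and Theorems \ref{2211040919} and \ref{2211061315}, with the finite-rank ideal supplying the nonzero proper ideal needed to make those theorems bite. I do not anticipate any serious obstacle beyond this verification and the attendant bookkeeping.
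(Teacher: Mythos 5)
Your proposal is correct and is essentially the paper's own (implicit) argument: the paper simply states that the corollary ``follows from the above results,'' meaning exactly the assembly you carry out --- centrality of $\c{L}(X)$ plus compact (indeed single-vector) generation of $X$ feed Lemma \ref{2211101430} for (i), Theorem \ref{2211040919} gives (ii), and Theorem \ref{2211061315} applied to the closed, proper, nonzero ideal $\c{M}\supseteq\c{F}(X)$ gives (iii) and (iv). Your explicit verification of $\c{Z}(\c{L}(X))=\mathbb{C}\,\r{id}_X$ and of the properness of the finite-rank ideal are exactly the bookkeeping the paper leaves to the reader.
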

A contractible Banach algebra $A$ is called symmetrically contractible if $A$ has a symmetric diagonal; that is, a diagonal $M$
satisfying $\c{F}_A(M)=M$ where $\c{F}_A:A\ot^\gamma A\to A\ot^\gamma A$ denotes flip i.e., the unique bounded linear mapping
defined by $(a\ot b)\mapsto(b\ot a)$. The matrix algebra $\b{M}_n$ is symmetrically contractible. Indeed, it is well-known that $\b{M}_n$
has the unique diagonal $n^{-1}\sum_{i,j=1}^n\delta_{ij}\ot\delta{ji}$ where $\delta_{ij}$'s
denote the standard basis of $\b{M}_n$. Thus any finite-dimensional contractible Banach algebra is symmetrically contractible.

The following results are variations of \cite[Proposition 5.3]{Johnson2}.
\begin{theorem}\label{2211050933}
Let $A$ be a symmetrically contractible Banach algebra. Then any symmetric diagonal of $A$ gives rise to a bounded normalized $\c{Z}(A)$-valued
trace for $A$. If $A$ is central, then $A$ has a normalized trace $\psi\in A^*$.\end{theorem}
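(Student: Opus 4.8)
The plan is to take the symmetric diagonal $M=\sum_{n=1}^\infty a_n\ot b_n$ in the form (\ref{p3}) and to show that the map $\Psi:A\to\c{Z}(A)$, $\Psi(c)=\sum_{n=1}^\infty a_ncb_n$, furnished by Proposition \ref{2211040917}, is a trace. Recall $\Psi$ is already known to be bounded, to take values in $\c{Z}(A)$, and to satisfy $\Psi(1)=\Delta(M)=1$; so the only thing left to establish is the trace identity $\Psi(cd)=\Psi(dc)$ for all $c,d\in A$. The first step would be to record a second expression for $\Psi$ coming from symmetry: for fixed $e\in A$ the bilinear map $(x,y)\mapsto xey$ on $A\times A$ is bounded and so induces a bounded linear operator $A\ot^\gamma A\to A$; applying it to the identity $\c{F}_A(M)=M$ yields $\Psi(e)=\sum_n a_neb_n=\sum_n b_nea_n$.

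The crux is then to combine this symmetric representation with the hyper-commutator property. For fixed $c\in A$, consider the bounded bilinear map $(x,y)\mapsto ycx$, of norm at most $\|c\|$, and let $\Theta_c:A\ot^\gamma A\to A$ be the induced bounded linear operator. The hyper-commutator relation $(d\ot1)M=M(1\ot d)$ reads $\sum_n da_n\ot b_n=\sum_n a_n\ot b_nd$ as an identity of elements of $A\ot^\gamma A$. Applying $\Theta_c$ to both sides and using its linearity and boundedness (which legitimizes term-by-term evaluation on the absolutely convergent series) gives $\sum_n b_ncda_n=\sum_n b_ndca_n$. By the symmetric representation from the first step, the left-hand side equals $\Psi(cd)$ and the right-hand side equals $\Psi(dc)$; hence $\Psi(cd)=\Psi(dc)$, as desired.

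Finally I would assemble the conclusions. The map $\Psi$ is a bounded linear operator $A\to\c{Z}(A)$ with $\Psi(1)=1$ satisfying the trace identity; moreover, since each $z\in\c{Z}(A)$ satisfies $a_nz=za_n$, one gets $\Psi(za)=z\Psi(a)$, so $\Psi$ is a normalized $\c{Z}(A)$-valued trace. When $A$ is central, $\c{Z}(A)=\m{C}1$, so writing $\Psi(c)=\psi(c)1$ defines $\psi\in A^*$ with $\psi(1)=1$ and $\psi(cd)=\psi(dc)$, a normalized scalar trace. The step I expect to be the main obstacle is the crux computation: the trace identity genuinely requires the symmetry of $M$ and the hyper-commutator property to be used simultaneously---neither alone suffices---and the only real care needed is to select the correct bounded bilinear map $(x,y)\mapsto ycx$ and to justify that it may be applied to the infinite-series identity in $A\ot^\gamma A$ term by term.
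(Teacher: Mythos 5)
Your proposal is correct and takes essentially the same route as the paper: both proofs combine the symmetry $\c{F}_A(M)=M$ with the hyper-commutator identity $(d\ot1)M=M(1\ot d)$ by applying bounded linear maps (induced from bounded bilinear maps, hence applicable term by term to the absolutely convergent series) to obtain $\Psi(cd)=\Psi(dc)$. The paper packages this as tensor-identity manipulations (flip, right multiplication by $1\ot c'$, then $\Delta$), while you first record the two-sided representation $\Psi(e)=\sum_n a_neb_n=\sum_n b_nea_n$ and then apply $\Theta_c$ to the commutator identity; this is the same argument in slightly different organization, and your extra observation that $\Psi(za)=z\Psi(a)$ for central $z$ is a harmless addition.
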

\begin{proof}Let $M$ be a symmetric diagonal for $A$ of the form (\ref{p3}). We saw in Proposition \ref{2211040917} that the assignment
$c\mapsto\sum_{n=1}^\infty a_ncb_n$ defines a bounded linear mapping $\Psi:A\to\c{Z}(A)$ with $\Psi(1)=1$. For every $c,c'\in A$ we have
$\sum_{n=1}^\infty b_n\ot a_ncc'=\sum_{n=1}^\infty cb_n\ot a_nc'$ and hence $\sum_{n=1}^\infty a_ncc'\ot b_n=\sum_{n=1}^\infty a_nc'\ot cb_n$. Thus
we have $$\Psi(cc')=\Delta\Big(\sum_{n=1}^\infty a_ncc'\ot b_n\Big)=\Delta\Big(\sum_{n=1}^\infty a_nc'\ot cb_n\Big)=\Psi(c'c).$$
$\psi$ is given as in Proposition \ref{2211040918}.\end{proof}
For the matrix algebra $\b{M}_n$, the unique diagonal of $\b{M}_n$ gives rise to the ordinary trace.
\section{\textbf{A NULL-PROPERTY OF DIAGONALS}}\label{2211070004}
Let $X$ be a Banach space. Consider the unique bounded linear operator $$\Upsilon:\c{L}(X)\ot^\gamma\c{L}(X)\to\c{L}(X\ot^\gamma X),$$ defined by
$$[\Upsilon(T\ot S)](x\ot y)=T(x)\ot S(y),\hspace{10mm}(T,S\in\c{L}(X),x,y\in X).$$
Then $\Upsilon$ is an algebra-morphism between Banach algebras. We denote the image under $\Upsilon$ of any element $N\in\c{L}(X)\ot^\gamma\c{L}(X)$,
by $N^\r{op}$. It follows from properties of projective tensor product, that $\|\Upsilon\|=1$ and hence $\|N^\r{op}\|\leq\|N\|_\gamma$.
Note that, in general, $\Upsilon$ is not one-to-one. (This fact can be concluded from the fact that the canonical mapping from
$X^*\ot^\gamma X^*$ onto the space of nuclear bilinear functionals on $X\times X$ is not necessarily one-to-one \cite[$\S$2.6]{Ryan1}.)
\begin{proposition}\label{2211061350}
Let $\Lambda\in\c{L}(X\ot^\gamma X)$ be such that for every one-rank operator $T\in\c{L}(X)$, $$(T\ot1)^\r{op}\Lambda=\Lambda(1\ot T)^\r{op}.$$
Then there is a unique operator $\Gamma$ in $\c{L}(X)$ such that $\Lambda=(1\ot\Gamma)^\r{op}\c{F}_{X}$.\end{proposition}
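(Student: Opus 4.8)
The plan is to reduce the operator identity to an identity on elementary tensors and then to read off $\Gamma$ by a ``slicing'' argument. First I would fix notation: for $f\in X^*$ and $\xi\in X$, let $T_{f,\xi}\in\c{L}(X)$ denote the rank-one operator $x\mapsto f(x)\xi$, and let $L_f:X\ot^\gamma X\to X$ be the bounded left-slice operator determined on elementary tensors by $u\ot v\mapsto f(u)v$ (so $\|L_f\|\leq\|f\|$). A direct check on elementary tensors, extended by continuity, shows that $(T_{f,\xi}\ot 1)^\r{op}(z)=\xi\ot L_f(z)$ for every $z\in X\ot^\gamma X$, while $(1\ot T_{f,\xi})^\r{op}(x\ot y)=f(y)\,(x\ot\xi)$. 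Since every rank-one operator is of the form $T_{f,\xi}$ (its range is spanned by some $\xi$, and the corresponding $f$ is automatically bounded), the hypothesis applies to all such $T_{f,\xi}$.

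Next I would apply the commutation hypothesis to an elementary tensor $x\ot y$. Evaluating $(T_{f,\xi}\ot1)^\r{op}\Lambda=\Lambda(1\ot T_{f,\xi})^\r{op}$ at $x\ot y$ yields the key relation
\[\xi\ot L_f\big(\Lambda(x\ot y)\big)=f(y)\,\Lambda(x\ot\xi),\qquad(x,y,\xi\in X,\ f\in X^*).\]
Now fix $x\in X$. Choosing any $f\in X^*$ and $y\in X$ with $f(y)=1$ (possible since $X\neq0$), the relation gives $\Lambda(x\ot\xi)=\xi\ot L_f(\Lambda(x\ot y))$ for all $\xi\in X$. Here I would define $\Gamma(x):=L_f(\Lambda(x\ot y))$.

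The main point to settle is that $\Gamma(x)$ is well defined, i.e. independent of the pair $(f,y)$ with $f(y)=1$; this is the only step where I expect any real subtlety. I would use the cancellation property of the projective tensor product, namely that $\xi\ot a=\xi\ot b$ with $\xi\neq0$ forces $a=b$ (apply $g\ot\r{id}$ for some $g\in X^*$ with $g(\xi)=1$). Given two admissible pairs $(f_1,y_1)$ and $(f_2,y_2)$, the displayed relation yields $\xi\ot L_{f_1}(\Lambda(x\ot y_1))=\Lambda(x\ot\xi)=\xi\ot L_{f_2}(\Lambda(x\ot y_2))$, and cancelling a nonzero $\xi$ gives the desired independence. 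Consequently $\Lambda(x\ot\xi)=\xi\ot\Gamma(x)$ for all $x,\xi\in X$.

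Finally I would verify that $\Gamma$ is linear and bounded: linearity in $x$ follows from linearity of $\Lambda$ together with the same cancellation, and the estimate $\|\xi\|\,\|\Gamma(x)\|=\|\xi\ot\Gamma(x)\|_\gamma=\|\Lambda(x\ot\xi)\|_\gamma\leq\|\Lambda\|\,\|x\|\,\|\xi\|$ gives $\Gamma\in\c{L}(X)$ with $\|\Gamma\|\leq\|\Lambda\|$. Since $(1\ot\Gamma)^\r{op}\c{F}_X$ sends $x\ot\xi$ to $\xi\ot\Gamma(x)$, the two bounded operators $\Lambda$ and $(1\ot\Gamma)^\r{op}\c{F}_X$ agree on the dense linear span of elementary tensors, hence everywhere, proving $\Lambda=(1\ot\Gamma)^\r{op}\c{F}_X$. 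Uniqueness of $\Gamma$ is immediate, since $(1\ot\Gamma_1)^\r{op}\c{F}_X=(1\ot\Gamma_2)^\r{op}\c{F}_X$ forces $\xi\ot\Gamma_1(x)=\xi\ot\Gamma_2(x)$ and cancellation gives $\Gamma_1=\Gamma_2$.
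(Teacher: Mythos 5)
Your proof is correct, and it takes a genuinely different (and somewhat more streamlined) route than the paper's. The paper fixes a nonzero $y$ and $f\in X^*$ with $f(y)=1$, applies the hypothesis only to the rank-one \emph{projection} $T:x\mapsto f(x)y$ evaluated at $x\ot y$ (so the range vector of $T$ is tied to the evaluation point), and uses the direct-sum decomposition $X\ot^\gamma X=(y\ot X)\oplus(\ker(f)\ot^\gamma X)$ to conclude $\Lambda(x\ot y)=y\ot\Gamma_y(x)$; it must then spend a separate patching step (comparing $\Gamma_y,\Gamma_{y'},\Gamma_{y+y'}$ for linearly independent $y,y'$, plus the scalar case) to show $\Gamma_y$ is independent of $y$. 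You instead decouple the range vector from the evaluation point: applying the hypothesis to $T_{f,\xi}$ with $\xi$ arbitrary and using the slice identity $(T_{f,\xi}\ot1)^\r{op}z=\xi\ot L_f(z)$, you obtain $\Lambda(x\ot\xi)=\xi\ot\Gamma(x)$ for \emph{all} $\xi$ simultaneously, with the explicit formula $\Gamma(x)=L_f\bigl(\Lambda(x\ot y)\bigr)$; this removes the need for any patching argument and yields the bound $\|\Gamma\|\leq\|\Lambda\|$ for free, whereas the paper's slicing is replaced by the complemented-subspace decomposition. Two small remarks: first, your well-definedness check is actually dispensable --- fixing one pair $(f,y)$ with $f(y)=1$ once and for all, the map $\Gamma=L_f\circ\Lambda\circ(\,\cdot\ot y)$ is automatically linear and bounded as a composition of bounded linear maps, and the key relation already gives $\Lambda(x\ot\xi)=\xi\ot\Gamma(x)$ for every $\xi$; second, the hypothesis applies to $T_{f,\xi}$ only when $f\neq0$ and $\xi\neq0$ (otherwise it is not of rank one), but your key relation holds trivially in the degenerate cases, so nothing is lost.
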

\begin{proof} Let $y$ be a nonzero vector in $X$, and let $f\in X^*$ be such that $f(y)=1$. Let $T\in\c{L}(X)$ to be defined by $x\mapsto f(x)y$.
For $x\in X$ we have
\begin{equation}\label{2211061210}
(T\ot1)^\r{op}\Lambda(x\ot y)=\Lambda(x\ot y).\end{equation}
$X$ has the decomposition $<y>\oplus\ker(f)$ where $<y>$ denotes the subspace generated by $y$. There exist
$z\in\ker(f)\ot^\gamma X$ and $w\in X$ such that $$\Lambda(x\ot y)=y\ot w+z.$$ It follows from (\ref{2211061210}) that $\Lambda(x\ot y)=y\ot w$.
Since the mapping $x\mapsto\Lambda(x\ot y)$ is linear and bounded, there is $\Gamma_y\in\c{L}(X)$ such that
$\Lambda(x\ot y)=y\ot\Gamma_y(x)$. Now, suppose that $y,y'$ in $X$ are linearly independent. We have
$$\Lambda(x\ot(y+y'))=y\ot\Gamma_y(x)+y'\ot\Gamma_{y'}(x),$$ $$\Lambda(x\ot(y+y'))=(y+y')\ot\Gamma_{y+y'}(x).$$
Thus $\Gamma_y=\Gamma_{y'}$. Also, it can be checked that for every nonzero scalar $\lambda$ we have $\Gamma_{\lambda y}=\Gamma_y$.
Thus there exists $\Gamma\in\c{L}(X)$ such that $\Lambda(x\ot y)=y\ot\Gamma(x)$ for every $x,y\in X$. The proof is complete.\end{proof}
\begin{corollary}\label{2211061606}
Let $M$ be an element of $\c{L}(X)\ot^\gamma\c{L}(X)$ that satisfies
\begin{equation}\label{2211071000}
(T\ot1)M=M(1\ot T),\hspace{10mm}(T\in\c{L}(X)\hspace{1mm}\text{of rank one}).
\end{equation}
Then there exists $\Gamma\in\c{L}(X)$ such that $M^\r{op}=(1\ot\Gamma)^\r{op}\c{F}_X$. Moreover, if $M$ is symmetric (i.e. $\c{F}_{\c{L}(X)}(M)=M$)
then there exists a scaler $\lambda$ such that $$M^\r{op}=\lambda\c{F}_X.$$\end{corollary}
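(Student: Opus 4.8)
The plan is to reduce everything to Proposition \ref{2211061350} by pushing the hypothesis through the algebra-morphism $\Upsilon$. First I would apply $\Upsilon$ to the identity (\ref{2211071000}). Since $\Upsilon$ is multiplicative, this gives $(T\ot1)^\r{op}M^\r{op}=M^\r{op}(1\ot T)^\r{op}$ for every rank-one $T\in\c{L}(X)$. Hence $\Lambda:=M^\r{op}\in\c{L}(X\ot^\gamma X)$ satisfies exactly the hypothesis of Proposition \ref{2211061350}, and the first assertion $M^\r{op}=(1\ot\Gamma)^\r{op}\c{F}_X$ follows immediately, with $\Gamma$ the unique operator produced there.

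For the symmetric case, the key is a compatibility identity between the two flips: I would first verify that $(\c{F}_{\c{L}(X)}(N))^\r{op}=\c{F}_X N^\r{op}\c{F}_X$ for all $N\in\c{L}(X)\ot^\gamma\c{L}(X)$. This is checked on elementary tensors $N=T\ot S$ --- both sides send $x\ot y$ to $S(x)\ot T(y)$ --- and then extended by linearity and continuity. Feeding the symmetry $\c{F}_{\c{L}(X)}(M)=M$ into this identity yields $M^\r{op}=\c{F}_X M^\r{op}\c{F}_X$. Substituting $M^\r{op}=(1\ot\Gamma)^\r{op}\c{F}_X$ from the first part and cancelling using $\c{F}_X^2=\r{id}_{X\ot^\gamma X}$, I obtain the commutation relation $\c{F}_X(1\ot\Gamma)^\r{op}=(1\ot\Gamma)^\r{op}\c{F}_X$.

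It remains to see that this relation forces $\Gamma$ to be a scalar. Evaluating both sides on an elementary tensor $x\ot y$ turns it into the concrete identity $\Gamma(y)\ot x=y\ot\Gamma(x)$ in $X\ot^\gamma X$. Applying a functional $f\in X^*$ to the first leg gives $f(\Gamma(y))x=f(y)\Gamma(x)$; if $\Gamma\neq0$, fixing some $x_0$ with $\Gamma(x_0)\neq0$ and taking $y=x_0$ shows $\Gamma(x_0)=\lambda x_0$ for a nonzero scalar $\lambda$, and then the same identity gives $f(\Gamma(y))=\lambda f(y)$ for all $f$ and $y$, i.e. $\Gamma=\lambda\,\r{id}_X$ (with $\lambda=0$ covering the case $\Gamma=0$). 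Consequently $M^\r{op}=(1\ot\lambda\,\r{id}_X)^\r{op}\c{F}_X=\lambda\c{F}_X$. I expect the only place demanding genuine care to be this last scalar-extraction step, where one must handle the degenerate tensors correctly; everything before it is a formal consequence of multiplicativity of $\Upsilon$ and Proposition \ref{2211061350}.
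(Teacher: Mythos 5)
Your proposal is correct and takes essentially the same route as the paper: push (\ref{2211071000}) through the algebra-morphism $\Upsilon$ to invoke Proposition \ref{2211061350}, use the flip-compatibility identity $[\c{F}_{\c{L}(X)}(M)]^\r{op}=\c{F}_XM^\r{op}\c{F}_X$ to obtain $\c{F}_X(1\ot\Gamma)^\r{op}=(1\ot\Gamma)^\r{op}\c{F}_X$, and conclude from $\Gamma(y)\ot x=y\ot\Gamma(x)$ that $\Gamma$ is scalar. Your slice-map extraction of the scalar merely makes explicit a step the paper leaves to the reader.
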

\begin{proof}The first part follows directly from Proposition \ref{2211061350}. Suppose that $M$ is symmetric. It follows from the identity
$[\c{F}_{\c{L}(X)}(M)]^\r{op}=\c{F}_XM^\r{op}\c{F}_X$, that $$\c{F}_X(1\ot\Gamma)^\r{op}=(1\ot\Gamma)^\r{op}\c{F}_X.$$
Thus for every $x,y\in X$ we have $\Gamma(y)\ot x=y\ot\Gamma(x)$. This means that $\Gamma$ is a scalar multiple of identity.
The proof is complete.\end{proof}
Let $Y,Y',Z$ be finite-dimensional Banach spaces. Similar to the mapping $\Upsilon$ above, we denote by $\Upsilon:N\mapsto N^\r{op}$
the unique bounded linear mapping $$\c{L}(Y,Z)\ot^\gamma\c{L}(Z,Y')\to\c{L}(Y\ot^\gamma Z,Z\ot^\gamma Y'),$$
given by $$(T\ot S)^\r{op}(y\ot z)=(T(y)\ot S(z)).$$ We know that this is a linear isomorphism.
\begin{lemma}\label{2211061420}
With the above assumptions, suppose that $\r{dim}(Y)=\r{dim}(Y')$. Suppose that $T:Y\to Y'$ is a linear isomorphism.
For every finite-dimensional Banach space $Z$, let the linear mapping $\tilde{T}_Z$ be given by
$$\tilde{T}_Z:Y\ot^\gamma Z\to Z\ot^\gamma Y',\hspace{10mm}(y\ot z)\mapsto(z\ot T(y)).$$ There is a numerical positive constant $c$
such that $c$ is independent from $Z$ (independent from norm and dimension of $Z$) and such that:
$$\|\Upsilon^{-1}(\tilde{T}_Z)\|_\gamma\geq c^{-1}\r{dim}(Z).$$\end{lemma}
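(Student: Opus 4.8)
The plan is to bound the projective norm from below by pairing against a single, carefully chosen functional. Recall that the dual of $\c{L}(Y,Z)\ot^\gamma\c{L}(Z,Y')$ is isometrically the space of bounded bilinear forms on $\c{L}(Y,Z)\times\c{L}(Z,Y')$, so that any such form $\phi$ induces a functional (still denoted $\phi$) with $|\phi(N)|\leq\|\phi\|\,\|N\|_\gamma$ for all $N$. The form I would use is obtained by composing the two operators, correcting the target by $T^{-1}$, and taking a trace on the \emph{fixed} space $Y$:
$$\phi(A,B):=\r{tr}(T^{-1}BA),\hspace{10mm}(A\in\c{L}(Y,Z),\ B\in\c{L}(Z,Y')).$$
The whole argument then splits into two independent computations: evaluating $\phi$ on $\Upsilon^{-1}(\tilde{T}_Z)$, and bounding $\|\phi\|$ uniformly in $Z$.

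First I would write down an explicit preimage. Fix bases $\{y_j\}_{j=1}^n$ of $Y$ and $\{z_i\}_{i=1}^d$ of $Z$ with biorthogonal functionals $\{y_j^*\}$, $\{z_i^*\}$, where $n=\r{dim}(Y)=\r{dim}(Y')$ and $d=\r{dim}(Z)$. A direct check shows
$$\Upsilon\Big(\sum_{i=1}^d\sum_{j=1}^n(z_i\ot y_j^*)\ot(T(y_j)\ot z_i^*)\Big)=\tilde{T}_Z,$$
since the operator on the left sends $y\ot z$ to $\big(\sum_i z_i^*(z)z_i\big)\ot\big(\sum_j y_j^*(y)T(y_j)\big)=z\ot T(y)$. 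As $\Upsilon$ is a linear isomorphism, the displayed sum \emph{is} $\Upsilon^{-1}(\tilde{T}_Z)$. Evaluating $\phi$ on each summand gives
$$\r{tr}\big(T^{-1}(T(y_j)\ot z_i^*)(z_i\ot y_j^*)\big)=\r{tr}(y_j\ot y_j^*)=1,$$
using $z_i^*(z_i)=1$ and $T^{-1}T=\r{id}$, so that $\phi(\Upsilon^{-1}(\tilde{T}_Z))=nd$.

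It remains to bound $\|\phi\|$ independently of $Z$, and this is the crux. Expanding $T^{-1}BA$ in an Auerbach basis $\{e_k\}_{k=1}^n$ of $Y$, with biorthogonal functionals $\{e_k^*\}$ of norm one, one obtains
$$|\r{tr}(T^{-1}BA)|\leq\sum_{k=1}^n|e_k^*(T^{-1}BAe_k)|\leq n\|T^{-1}BA\|\leq n\|T^{-1}\|\,\|A\|\,\|B\|,$$
whence $\|\phi\|\leq n\|T^{-1}\|$. Combining this with the duality inequality yields
$$\|\Upsilon^{-1}(\tilde{T}_Z)\|_\gamma\geq\frac{|\phi(\Upsilon^{-1}(\tilde{T}_Z))|}{\|\phi\|}\geq\frac{nd}{\,n\|T^{-1}\|\,}=\frac{d}{\|T^{-1}\|},$$
so the assertion holds with $c:=\|T^{-1}\|$, which depends only on $T$ and not on the norm or dimension of $Z$.

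The main obstacle is exactly the uniformity of $\|\phi\|$ in $Z$. The naive alternative of writing $\r{tr}(T^{-1}BA)=\r{tr}(AT^{-1}B)$ and tracing $AT^{-1}B$ on $Z$ would, through an Auerbach basis of $Z$, produce a spurious factor $\r{dim}(Z)$ and collapse the estimate. The essential point is therefore to trace on the fixed space $Y$, where the rank is controlled by $n$, so that the dimension factor appearing in the trace estimate is $n$ rather than $\r{dim}(Z)$; all of the growth in $\r{dim}(Z)$ is then carried by the value $\phi(\Upsilon^{-1}(\tilde{T}_Z))=nd$, which is what forces $\|\Upsilon^{-1}(\tilde{T}_Z)\|_\gamma$ to grow linearly in $\r{dim}(Z)$.
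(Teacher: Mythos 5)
Your proof is correct and takes essentially the same route as the paper's: an explicit elementary-tensor preimage of $\tilde{T}_Z$ paired, via projective-tensor duality, against a trace-type bilinear form computed on the fixed space $Y$, so that its norm is independent of $Z$. Indeed your form $\phi(A,B)=\r{tr}(T^{-1}BA)$ is, up to the factor $n=\dim(Y)$, exactly the paper's functional $\mu(P,Q)=\nu(QP)$ (normalized trace in the matched bases $\{y_j\}$, $\{T(y_j)\}$); the only cosmetic difference is that you make the norm bound explicit via an Auerbach basis, obtaining the concrete constant $c=\|T^{-1}\|$, where the paper leaves $c=\|\nu\|$ abstract.
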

\begin{proof}Suppose that $y_1,\ldots,y_k$ and $z_1,\ldots,z_m$ are vector basis respectively for $Y$ and $Z$, and let $y'_i=T(y_i)$.
Let the linear operators $$S_{ij}:Y\to Z,\hspace{5mm}S'_{ji}:Z\to Y',\hspace{10mm}(1\leq i\leq k,1\leq j\leq m)$$
be given by $$S_{ij}(y_i)=z_j, S_{ij}(y_q)=0,\hspace{3mm}(q\neq i),\hspace{10mm}S'_{ji}(z_j)=y'_i, S'_{ji}(z_q)=0,\hspace{3mm}(q\neq j).$$
Let $N:=S_{ij}\ot S'_{ji}$. Then $N^\r{op}=\tilde{T}_Z$ and hence $\Upsilon^{-1}(\tilde{T}_Z)=N$.

Let $\nu$ denote the linear functional on $\c{L}(Y,Y')$ that associates to any operator $Y\to Y'$, the normalized trace of its matrix in the bases $y_1,\ldots,y_k$ and $y'_1,\ldots,y'_k$ of $Y$ and $Y'$. Suppose that $c$ denotes the functional-norm of $\nu$. It is clear that $c\neq0$.
Consider the bilinear functional $$\mu:\c{L}(Y,Z)\times\c{L}(Z,Y')\to\bb{C},\hspace{10mm}(P,Q)\mapsto\nu(QP).$$
Then we have $\|\mu\|\leq c$ and hence $\|c^{-1}\mu\|\leq1$. Now, it follows from the properties of projective tensor-norm that
$$\|N\|_\gamma\geq|c^{-1}\mu(N)|=c^{-1}m.$$\end{proof}
\begin{proposition}\label{2211061008}
Let $X$ be an infinite-dimensional Banach space. Let $M\in\c{L}(X)\ot^\gamma\c{L}(X)$ be an element that satisfies (\ref{2211071000}).
Then $M\in\ker(\Upsilon)$. In other notation, $M^\r{op}=0$.\end{proposition}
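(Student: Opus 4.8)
The plan is to feed the structural description of $M^\r{op}$ supplied by Corollary \ref{2211061606} into the dimension-growth estimate of Lemma \ref{2211061420}, letting the Kadec-Snobar bound on projection norms decide the balance. First I would apply Corollary \ref{2211061606}: since $M$ satisfies (\ref{2211071000}), there is $\Gamma\in\c{L}(X)$ with $M^\r{op}=(1\ot\Gamma)^\r{op}\c{F}_X$, so that $M^\r{op}(x\ot y)=y\ot\Gamma(x)$ for all $x,y\in X$. It then suffices to show $\Gamma=0$, and I would argue by contradiction, assuming there is $x_0\in X$ with $\Gamma(x_0)\neq0$. Set $Y:=\sl x_0\sr$ and $Y':=\sl\Gamma(x_0)\sr$, two one-dimensional subspaces of equal dimension, and let $T:=\Gamma|_Y:Y\to Y'$, which is a linear isomorphism.

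The core of the argument is a compression of $M$ to finite-dimensional data of growing dimension. For each $m$, using that $X$ is infinite-dimensional, I would pick an $m$-dimensional subspace $Z\subseteq X$; by the Kadec-Snobar estimate \cite[Theorem 6.28]{Fabian1} there is a projection $\pi_Z:X\to Z$ with $\|\pi_Z\|\leq\sqrt{m}$, while a Hahn-Banach argument provides a norm-one projection $\pi_{Y'}:X\to Y'$ onto the one-dimensional space $Y'$. Writing $M=\sum_n P_n\ot Q_n$, I would form
$$\hat{M}:=\sum_n(\pi_ZP_n\iota_Y)\ot(\pi_{Y'}Q_n\iota_Z)\in\c{L}(Y,Z)\ot^\gamma\c{L}(Z,Y'),$$
where $\iota_Y,\iota_Z$ are the isometric inclusions. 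A direct computation using $M^\r{op}(y\ot z)=z\ot\Gamma(y)$ together with $\pi_Z|_Z=\r{id}_Z$ and $\pi_{Y'}\Gamma|_Y=T$ shows that $\Upsilon(\hat{M})=\tilde{T}_Z$ in the notation of Lemma \ref{2211061420}; equivalently $\hat{M}=\Upsilon^{-1}(\tilde{T}_Z)$. Lemma \ref{2211061420} then yields the lower bound $\|\hat{M}\|_\gamma\geq c^{-1}m$ with $c$ independent of $m$ and $Z$. On the other hand, since $\hat{M}$ is obtained from $M$ by sandwiching each leg with fixed operators of norms $\leq\|\pi_Z\|$ and $\leq\|\pi_{Y'}\|=1$, the behaviour of the projective tensor-norm gives the upper bound $\|\hat{M}\|_\gamma\leq\|\pi_Z\|\,\|M\|_\gamma\leq\sqrt{m}\,\|M\|_\gamma$. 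Comparing the two estimates forces $\sqrt{m}\leq c\|M\|_\gamma$ for every $m$, which is impossible; hence $\Gamma=0$ and $M^\r{op}=0$.

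The step I expect to carry the weight is precisely the clash between these two estimates for $\|\hat{M}\|_\gamma$: Lemma \ref{2211061420} imposes a lower bound linear in $\dim(Z)$, whereas the compression inflates the norm only by the projection constant $\|\pi_Z\|$. The argument therefore hinges on the universal projection norm growing sub-linearly in the dimension; any bound of the form $o(m)$ would close the gap, and the Kadec-Snobar estimate supplies the sharp $O(\sqrt{m})$ for an arbitrary Banach space. The remaining verifications—that $\Upsilon(\hat{M})=\tilde{T}_Z$ and the elementary norm bookkeeping—are routine and I would treat them briefly.
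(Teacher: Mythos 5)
Your proposal is correct and follows essentially the same route as the paper: apply Corollary \ref{2211061606} to get $\Gamma$, assume $\Gamma\neq0$, compress $M$ by $(P_Z\ot P_{Y'})M(E_Y\ot E_Z)$ with $P_Z$ from Kadec--Snobar, identify the compression with $\Upsilon^{-1}(\tilde{T}_Z)$, and let Lemma \ref{2211061420}'s lower bound linear in $\dim(Z)$ clash with the $O(\sqrt{\dim Z})$ upper bound. Your minor variations (writing the sandwich termwise, using a norm-one Hahn--Banach projection onto $Y'$) are inessential.
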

\begin{proof}Suppose that $\Gamma\in\c{L}(X)$ is as in Corollary \ref{2211061606}. Suppose that $M^\r{op}\neq0$ and hence $\Gamma\neq0$.
Let $y,y'$ be two nonzero vectors in $X$ such that $\Gamma(y)=y'$. Suppose that $Y,Y'$ denote the one-dimensional subspaces of $X$ generated
respectively by $y,y'$, and suppose that $T:Y\to Y'$ is defined by $T(y)=y'$. Let $Z$ be an arbitrary finite-dimensional subspace of $X$.
Suppose that $E_Y:Y\to X$ and $E_Z:Z\to X$ denote the embedding-maps and $P_{Y'}:X\to Y'$ is an arbitrary continuous projection from $X$ onto $Y'$.
By Kadec-Snobar's Theorem \cite[Theorem 6.28]{Fabian1} we know that there exists a continuous projection $P_Z:X\to Z$, from $X$ onto $Z$,
such that $\|P_Z\|<1+\sqrt{\r{dim}(Z)}$.
Let $$N:=(P_Z\ot P_{Y'})M(E_Y\ot E_{Z})\in\c{L}(Y,Z)\ot^\gamma\c{L}(Z,Y').$$
We have $$\|N\|_\gamma\leq\|P_Z\|\|P_{Y'}\|\|M\|_\gamma,\hspace{10mm}N^\r{op}=\tilde{T}_Z,$$
where $\tilde{T}_Z$ is as in Lemma \ref{2211061420}. Now, by Lemma \ref{2211061420} we have
$$\frac{\r{dim}(Z)}{c\|P_{Y'}\|(1+\sqrt{\r{dim}(Z)})}<\|M\|_\gamma.$$
This implies that $\|M\|_\gamma=\infty$, a contradiction. Thus, we have $M^\r{op}=0$.\end{proof}
\begin{definition}
\emph{Let $A$ be a Banach algebra and let $M\in A\ot^\gamma A$. We say that $M$ is a hyper-commutator for $A$
if $$aM=Ma\hspace{10mm}(a\in A).$$ }\end{definition}
By definition, diagonals of contractible Banach algebras are hyper-commutator elements.
Following the discussion of  $\S$\ref{2211070001}, the next question is very natural.
\begin{question}
Does there exist an infinite-dimensional Banach algebra with a nonzero hyper-commutator?\end{question}
The next theorem which is the main result of this note, establishes a null-property of hyper-commutators.
\begin{theorem}\label{2211071001}
Let $X$ be an infinite-dimensional Banach space. Let $A\subseteq\c{L}(X)$ be a Banach subalgebra such that contains
the ideal of finite-rank operators. Then the image of any hyper-commutator of $A$ under the canonical algebra-morphism
$$A\ot^\gamma A\hookrightarrow\c{L}(X)\ot^\gamma\c{L}(X)\to\c{L}(X\ot^\gamma X),$$ vanishes.\end{theorem}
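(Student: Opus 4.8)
The plan is to reduce the general statement for an arbitrary subalgebra $A$ containing the finite-rank operators to the already-proven Proposition \ref{2211061008}, which handles the case $A = \c{L}(X)$. The key observation is that a hyper-commutator for $A$ satisfies $aM = Ma$ for all $a \in A$, and since $A$ contains every finite-rank operator, in particular it contains every rank-one operator $T \in \c{L}(X)$. Therefore the defining relation of a hyper-commutator for $A$ immediately yields
\begin{equation*}
(T\ot 1)M = M(1\ot T), \hspace{10mm}(T\in\c{L}(X)\text{ of rank one}),
\end{equation*}
which is precisely condition (\ref{2211071000}) of Corollary \ref{2211061606} and Proposition \ref{2211061008}.

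First I would make explicit the identification of the module operations. Recall that $A\ot^\gamma A$ is a Banach $A$-bimodule via $a(u) = (a\ot 1)u$ and $(u)a = u(1\ot a)$, so the hyper-commutator condition $aM = Ma$ unwinds exactly to $(a\ot 1)M = M(1\ot a)$ for all $a\in A$. Next I would invoke the inclusion $A\ot^\gamma A \hookrightarrow \c{L}(X)\ot^\gamma\c{L}(X)$: viewing $M$ as an element of the larger space $\c{L}(X)\ot^\gamma\c{L}(X)$, the relation $(a\ot 1)M = M(1\ot a)$ continues to hold there for all $a\in A$, and restricting attention to rank-one $a$ gives the required hypothesis.

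With condition (\ref{2211071000}) verified for $M$ regarded inside $\c{L}(X)\ot^\gamma\c{L}(X)$, I would apply Proposition \ref{2211061008} directly. Since $X$ is infinite-dimensional, that proposition asserts $M^\r{op} = 0$, i.e. the image of $M$ under $\Upsilon:\c{L}(X)\ot^\gamma\c{L}(X)\to\c{L}(X\ot^\gamma X)$ vanishes. But the canonical algebra-morphism in the statement is exactly the composite of the inclusion $A\ot^\gamma A \hookrightarrow \c{L}(X)\ot^\gamma\c{L}(X)$ followed by $\Upsilon$, so the image of $M$ under this composite is $M^\r{op} = 0$, which is what we wanted to prove.

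There is essentially no obstacle here; the real content was developed earlier. The one point requiring a word of care is the passage through the inclusion $A\ot^\gamma A \hookrightarrow \c{L}(X)\ot^\gamma\c{L}(X)$: one should note that this is a contractive algebra-homomorphism of projective tensor products induced by the inclusion $A\hookrightarrow\c{L}(X)$, so that it respects the bimodule structure and carries the relation $(a\ot1)M = M(1\ot a)$ faithfully into the larger algebra. Since the hypotheses of Proposition \ref{2211061008} only demand the commutation relation for rank-one operators — all of which lie in $A$ by assumption — the argument goes through verbatim, and the theorem follows immediately.
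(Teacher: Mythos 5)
Your proposal is correct and follows the paper's own route exactly: the paper's proof is the one-line observation that the theorem "follows directly from Proposition \ref{2211061008}," and your write-up simply makes explicit the (correct) reasons why — rank-one operators lie in $A$, the hyper-commutator relation pushes forward along the canonical map $A\ot^\gamma A\to\c{L}(X)\ot^\gamma\c{L}(X)$, and then Proposition \ref{2211061008} applies to the image.
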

\begin{proof}It follows directly from Proposition \ref{2211061008}.\end{proof}
Note that for any Banach algebra $A$ as in Theorem \ref{2211071001}, we have $$\c{Z}(A)=0\hspace{5mm}\text{or}\hspace{5mm}\c{Z}(A)=1\bb{C}.$$
\begin{remark}
\emph{Suppose that $\c{L}(X)$ is contractible. By Theorem \ref{2211071001}, to prove that $X$ is finite-dimensional, it is enough to prove that
at least one of the diagonals of $\c{L}(X)$ is invertible as a member of the Banach algebra $\c{L}(X)\ot^\gamma\c{L}(X)$.
Note that for the unique diagonal $M$ of $\b{M}_n$ we have $n^2M^2=1\ot1$ in the Banach algebra $\b{M}_n\ot^\gamma\b{M}_n$.}\end{remark}
\begin{remark}
\emph{Suppose that $X$ is an infinite dimensional Banach space for which the canonical mapping $\Upsilon$ is one-to-one.
Then, by Theorem \ref{2211071001}, any Banach subalgebra of $\c{L}(X)$ containing the ideal of finite-rank operators,
is not contractible.}\end{remark}
\begin{remark}
\emph{Let $X$ be an infinite dimensional Banach space. If $A=\c{L}(X)$ has at least two maximal ideals, then by
Corollary \ref{2211071100} we know that $A$ is not contractible. (See \cite{Laustsen1} for some examples of such Banach spaces.)
Suppose that $A$ has only one maximal ideal $J$. To prove that $A$ is not contractible it is enough
to show that the closer $\tilde{J}$ of the ideal $(J\ot A)+(A\ot J)\subset A\ot^\gamma A$ is a maximal ideal of $A\ot^\gamma A$: Indeed,
if $A$ is contractible then $A\ot^\gamma A$ is contractible and since $A\ot^\gamma A$ is central
(this fact can be checked by considering projections onto finite-dimensional subspaces of $X$ similar to the first part of
the proof of Lemma \ref{2211061420}) then it must have a unique maximal ideal. Thus we must have $\ker(\Upsilon)\subseteq\tilde{J}$ and
hence for any diagonal $M$ of $A$, $M$ belongs to $\tilde{J}$.
Therefore, we have $1=\Delta(M)\in\Delta(\tilde{J})\subset J$ that contradicts properness of $J$.}\end{remark}

{\footnotesize}

{\footnotesize
\begin{flushright}
\begin{tabular}{ccccc}
\emph{Maysam Maysami Sadr}\\
\emph{Institute for Advanced Studies in Basic Sciences}\\
\emph{Department of Mathematics}\\
\emph{Zanjan 45137-66731, Iran}\\
\emph{sadr@iasbs.ac.ir}
\end{tabular}
\end{flushright}}

\begin{thebibliography}{22}
\bibitem{Argyros1}
S.A. Argyros, R.G. Haydon,
\emph{A hereditarily indecomposable $\c{L}_\infty$-space that solves the scalar-plus-compact problem},
Acta Math. {\bf 206} (2011), 1--54.
(arXiv:0903.3921 [math.FA])
\bibitem{Fabian1}
M. Fabian, P. Habala, P. Hajek, V. Montesinos, V. Zizler,
\emph{Banach Space Theory: The Basis for Linear and Nonlinear Analysis},
CMS Books in Mathematics, Springer, New York, 2011.
\bibitem{Gronbaek1}
N. Gronb{\ae}k,
\emph{Various notions of amenability, a survey of problems},
In \emph{Proceedings of 13th International Conference on Banach Algebras in Blaubeuren}, pp. 535--547, Walter de Gruyter, Berlin 1998.
\bibitem{Johnson1}
B.E. Johnson,
\emph{Cohomology in Banach algebras},
Mem. Amer. Math. Soc. {\bf 127} (1972), 1--96.
\bibitem{Johnson2}
B.E. Johnson,
\emph{Symmetric amenability and the nonexistence of Lie and Jordan derivations},
Math. Proc. Cambridge Phil. Soc. {\bf 120} (1996), 455--473.
\bibitem{Laustsen1}
N.J. Laustsen,
\emph{Maximal ideals in the algebra of operators on certain Banach spaces},
Proc. Edinb. Math. Soc. {\bf 45} (2002), 523--546.
\bibitem{Mewomo1}
O.T. Mewomo,
\emph{Various notions of amenability in Banach algebras},
Expo. Math. {\bf 29} (2011), 283--299.
\bibitem{Runde1}
V. Runde,
\emph{Lectures on Amenability},
Springer-Verlag, Berlin, Heidelberg, 2002.
\bibitem{Runde3}
V. Runde,
\emph{(Non-)amenability of $B(E)$},
Banach Center Publ. {\bf 91} (2010), 339--351.
(arXiv:0909.2628 [math.FA])
\bibitem{Runde2}
V. Runde,
\emph{Amenable Banach Algebras},
Springer Monographs in Mathematics, Science+Business Media, Berlin, 2020.
\bibitem{Ryan1}
R.A. Ryan,
\emph{Introduction to Tensor Products of Banach Spaces},
Springer Monographs in Mathematics, Springer-Verlag, London, 2002.
\bibitem{Ozawa1}
N. Ozawa,
\emph{A remark on contractible Banach algebras},
Kyushu J. Math. {\bf 67} (2013), 51--53.
(arXiv:1110.6216 [math.FA])
\end{thebibliography}
\end{document}